\newtheorem{theorem}{Theorem}
\newtheorem{lemma}{Lemma}
\newtheorem{klemma}[lemma]{$(k)$ Lemma}
\newenvironment{definition}[1][Definition]{\begin{trivlist}
\item[\hskip \labelsep {\bfseries #1}]}{\end{trivlist}}
\newenvironment{remark}[1][Remark]{\begin{trivlist}
\item[\hskip \labelsep {\bfseries #1}]}{\end{trivlist}}
\theoremstyle{definition}
\theoremstyle{remark}
\newcommand{\scr}[1]{\ensuremath{\mathcal {#1}}}
\renewcommand{\phi}{\varphi}
\newcommand{\notarrow}{\kern .42em\not\kern -.42em\longrightarrow}
\begin{document}

\centerline {\Large Generalizing Hartogs' Trichotomy Theorem}

\vspace{.1in} \centerline{David Feldman}\centerline{University of
New Hampshire }

\vspace{.1in}\centerline{Mehmet Orhon}\centerline{University of
New Hampshire}

\vspace{.1in}
\centerline{{\bf with an appendix by}}

\vspace{.1in}\centerline{Andreas Blass}\centerline{University of Michigan}

\begin{abstract}
A celebrated argument of F.~Hartogs (1915) deduces the Axiom of
Choice from the hypothesis of comparability for any pair of
cardinals.	We show how each of a sequence of seemingly much
weaker hypotheses suffices. Fixing a finite number $k>1$, the Axiom
of Choice follows if merely any family of $k$ cardinals contains
at least one comparable pair.
\end{abstract}

\vspace{.1in} {\bf Introduction}

\vspace{.1in} The Trichotomy Principle says that for any pair of
sets $A$ and $B$, either a bijection connects the two, or else
precisely one set of the pair will inject into the other. Hartogs
established the logical equivalence, over {\bf ZF}, between the
Trichotomy Principle and the Well-Ordering Principle.  As {\bf ZF}
suffices to prove the Schr\"oder-Bernstein theorem, the heart of
Trichotomy lies in the existence of at least one injection
connecting arbitrary $A$ and $B$ (in whichever direction).

Hartogs' theorem seems a remarkable achievement, even judged
against the large industry that eventually flowered around the
theme of logical equivalents of the Axiom of Choice. Trichotomy
certainly follows quickly from the Well-Ordering Principle, but
deriving enough structure out of  Trichotomy to well-order an
arbitrary set might seem daunting.

Hartogs had the remarkable idea of associating to any set $A$ a
certain ordinal that we write here as $\omega(A)$.	The definition
of $\omega(A)$, as we shall now recall it, depends on von
Neumann's formulation of ordinals. Von Neumann views ordinals as
{\em hereditarily transitive sets}.	 By definition, each element
of a {\em transitive} set $\kappa$ also occurs as a subset of
$\kappa$,
and {\em hereditarily transitive} means transitive with
only transitive elements. An ordinal, viewed as an ordered set,
contains as elements just some other ordinals ordered by inclusion
(or equivalently, by membership); in fact an ordinal's elements
coincide with its proper initial segments. Given any well-ordered
set, one may produce (by transfinite induction) a unique ordinal
with the same order type.

Hartogs first associates to a set $A$ the set $W(A)$ of {\em all}
well-ordered sets modeled on subsets of $A$ and then defines the
ordinal $\omega(A)$ as the set of all ordinals order-isomorphic to
elements of $W(A)$.	 Hartogs argues that $\omega(A)$ cannot inject
into $A$ (lest $\omega(A)$ contain itself). Then Trichotomy
guarantees that $A$ injects into $\omega(A)$.

Here we strengthen Hartogs' theorem, by deriving the well-ordering
principle from seemingly weaker statements.

We say a family of sets ${\scr F}$ {\em contains an injective} if there exists
at least one injective map $i$ which has, for its  source and target, distinct
sets in ${\scr F}.$

We say the {\em $k$-Trichotomy Principle} holds if every family of
cardinality $k$ contains an injective; so 2-Trichotomy recaptures
classical Trichotomy.

Here we shall prove the

\begin{theorem}
For all finite $k$, the $k$-Trichotomy Principle
implies the Well-Ordering Principle.
\end{theorem}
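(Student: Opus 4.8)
The plan is to argue by contraposition and reduce the theorem to a purely combinatorial statement about cardinals. Since the Schr\"oder--Bernstein theorem holds in {\bf ZF}, the failure of ``$\scr F$ contains an injective'' for a $k$-element family $\scr F$ means exactly that its members are pairwise incomparable, that is, form an antichain of $k$ distinct cardinals. Thus the $k$-Trichotomy Principle is precisely the assertion that no antichain of $k$ cardinals exists, and the theorem is equivalent to the following: if some set fails to be well-orderable, then for every finite $k$ one can exhibit $k$ pairwise incomparable cardinals. So I fix a non-well-orderable set $A$ and aim to manufacture antichains of every finite size.

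The base case recovers Hartogs. If $A$ is not well-orderable then $A$ does not inject into the ordinal $\omega(A)$, since such an injection would well-order $A$, while $\omega(A)$ does not inject into $A$ by Hartogs' self-membership argument; hence $\{\,|A|,\ \omega(A)\,\}$ is a $2$-antichain. A structural constraint now shapes the whole construction: an antichain can contain at most one well-orderable cardinal, because any two ordinals are comparable. Consequently, producing a $k$-antichain forces me to produce $k-1$ pairwise incomparable cardinals none of which is well-orderable, and it is here that the naive operations fail --- every disjoint sum, product, finite power, or power set of $A$ injects comparably above or below $A$, so such operations only ever yield chains.

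The heart of the argument will therefore be a ``$(k)$ Lemma'', proved by induction on $k$, that enlarges an antichain of non-well-orderable cardinals from size $k$ to size $k+1$. The new member must be built so that no canonical injection relates it to any earlier member in either direction; the natural engines for this are the constructions that admit no uniform embedding in {\bf ZF}, such as the cardinals $|[A]^{n}|$ of $n$-element subsets (or of injective $n$-tuples), whose mutual incomparability is exactly the phenomenon that choice would destroy. To keep the inductive step uniform across the two regimes --- $A$ Dedekind-finite versus $A$ Dedekind-infinite --- I expect to combine these symmetric-power cardinals with a sufficiently large, freshly computed Hartogs ordinal attached to the running disjoint sum of the antichain, chosen so that any injection into a previous member would embed too long a well-ordering into a set that cannot support it, while any injection out of a previous member would well-order a non-well-orderable piece.

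The main obstacle is precisely this inductive step: guaranteeing that the single new cardinal is simultaneously incomparable with all $k$ previously constructed cardinals, not merely with one of them. Incomparability with a fixed cardinal is the easy Hartogs-style contradiction; the real difficulty is the simultaneity, since the earlier members may have very different internal structure. I anticipate handling this by choosing the attached ordinals along a rapidly growing sequence of iterated Hartogs numbers, so that the $k$ cardinals become ``orthogonal'', each carrying a distinct, non-interacting block of well-orderable content; a bookkeeping argument should then rule out injections in every direction at once, completing the induction and hence the theorem.
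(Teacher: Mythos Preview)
Your contrapositive framing and your instinct to attach a rapidly growing tower of iterated Hartogs ordinals so as to make the pieces ``orthogonal'' are exactly on target; this is precisely the mechanism in both the main proof and Blass' appendix. The appendix, in particular, carries out just this strategy: from a non-well-orderable $X$ it exhibits the $k$ sets $\scr Q^{k-i-1}(X)+\kappa_i$ (where $\scr Q(X)=X\times\scr P(X)$ and the $\kappa_i$ form an increasing run of Hartogs numbers) as an explicit antichain, and the main proof uses the analogous family $\scr P^i(A)+\omega^{k-i}(\scr P^{k-1}(A))$.

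The genuine gap is your choice of the non-ordinal component. The symmetric powers $[A]^n$ (or sets of injective $n$-tuples) are the wrong engine: nothing in {\bf ZF} prevents $[A]^n$ from injecting into $A$ even when $A$ is not well-orderable --- this happens, for instance, whenever $A\times A\leq A$ --- so an injection between two of your proposed sets $[A]^m+\kappa_m\to[A]^n+\kappa_n$ yields no contradiction. What the argument actually requires is a chain $B_0<B_1<\cdots<B_{k-1}$ with the Cantor-type property that $B_{i+1}\leq B_i+\kappa$ already forces $B_i$ to be well-orderable. The paper secures this with iterated power sets via Lemma~6 (if $A\cong A+A$ and $2^A\leq A+\kappa$ then $A$ is well-orderable), and the appendix with iterated $\scr Q$ via Lemma~7; in both cases Cantor's theorem is the irreplaceable ingredient, and symmetric powers have no analogue of it. A secondary point: the construction is not literally an induction extending a given antichain by one member. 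Both proofs build all $k$ sets at once, with the non-ordinal parts running through iterates indexed up to $k-1$ and the ordinal tags calibrated against the largest iterate $\scr P^{k-1}(A)$ (resp.\ $\scr Q^{k-1}(X)$); adding a $(k{+}1)$-st member would force recalibrating the ordinals already attached to the first $k$.
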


\vspace{.1in} {\bf Notation and basic terminology}

 For sets $A$ and $B$ we write $A\leq B$ if some map injects
$A$ into $B$.  We write $A\cong B$	if a bijection takes $A$ to $B$.
We write $A<B$ if $A\leq B$ but not $A\cong B$.

We call a set {\em infinite} if every finite ordinal injects into
it.	 We write $\omega=\{0,1,2,\ldots\}$ for the smallest infinite
ordinal. We view ordinals as sets of ordinals. In the body of the
paper we shall always take {\em cardinal} in its narrow sense,
meaning an ordinal larger than any of its elements.	 Nevertheless
in our opening and	closing remarks we use {\em cardinal} in the
broader sense, an equivalence class of sets under the relation of
admitting a mutual bijection.

We write $A+B$ for the disjoint union of $A$ and $B$ and $nA$ for the
disjoint union of $n$ copies of $A$.

By a subquotient of a set $A$, we mean any quotient set of a subset of $A$.

We write $2^A$ for the powerset of $A$.

\vspace{.1in} {\bf Proof of the Theorem}

In the sequel, a ($k$) indicates a proposition whose validity depends on
$k$-Trichotomy.

\begin{definition}
 Given any set $A$, well-orderable or not, we
shall write $\omega(A)$ for the smallest ordinal that does not
inject into $A$ (as per the Introduction). $\omega(A)$ always
exists; it contains precisely the ordinals of all possible
well-orderings of subsets of $A$. For well-orderable $A$,
$A<\omega(A)$;	for non-well-orderable $A$, no injection will
connect $A$ and $\omega(A)$ in either direction.
\end{definition}

By its definition, $\omega(A)$ gives a strict upper bound on the
cardinality of a well-ordered set $\kappa$ injecting {\em into} $A$.
But observe also that an injection {\em from} $A$ into $B+\kappa$ hits
fewer than $\omega(A)$ elements of $\kappa$.

\begin{klemma} Given a	cardinal $\kappa$ and sets $A_1 \leq
\cdots \leq A_i \leq \cdots\leq A_k$ with $\omega(A_i)=\kappa$ for all
$i$, there exist $n<m$ and a finite set $R$ such that
$A_m \leq A_n + R$; in case $\kappa>\omega$, we even take
$R=\emptyset$ so that $A_m\cong A_n$.
\end{klemma}

\begin{proof}
Fix any decreasing sequence of infinite	 cardinals
$\kappa_1>\cdots>\kappa_{k}=\kappa$.  (Work backwards, say,
setting each $\kappa_j=\omega(\kappa_{j+1})$.)

Apply $k$-Trichotomy to the family
$A_1+\kappa_1,A_2+\kappa_2,\ldots,A_k+\kappa_k$ to get an
injection $A_m+\kappa_m\rightarrow A_n+\kappa_n$ with $m$ and $n$
distinct.

Focus just now on the restricted injection $i:\kappa_m\rightarrow
A_n+\kappa_n$. As  $i^{-1}(A_n)$ and $i^{-1}(\kappa_n)$ partition
the cardinal $\kappa_m$, $\kappa_m$
admits a bijection with one of
these. But if $i^{-1}(A_n)$ bijects with $\kappa_m$, $\kappa_m$
injects into $A_n$ contradicting $\omega(A_n)=\kappa\leq\kappa_m$.
So $i^{-1}(\kappa_n)$ bijects with $\kappa_m$; $\kappa_m$ injects
into $\kappa_n$; and we must have $n < m$.

Restrict $A_m+\kappa_m\rightarrow A_n+\kappa_n$ next to
$j:A_m\rightarrow A_n+\kappa_n$. $\omega(A_m)=\kappa$ implies
 $j^{-1}(\kappa_n)<\kappa$. Write $R:=j(j^{-1}(\kappa_n))$; so $R\cong
j^{-1}(\kappa_n)<\kappa$. Then $R$ also injects into $A_n$ since
$\omega(A_n)=\kappa$ too.

$\kappa=\omega$ makes $R$ finite. $\kappa > \omega$ makes $\omega$
(and also $R$) inject into $A_n$. So $A_n\cong S+R$, say, and also
$A_n\cong T+\omega$, say.  Now on $R$ infinite (and well-ordered)
we have $R+R\cong R$, so $A_n \cong S+R\cong S+R+R \cong A_n +R$.
On $R$ finite we have $A_n \cong T+\omega\cong T+\omega+R \cong
A_n +R$. Either way,
  $A_n + R
\cong A_n$.	  Thus $A_m$ injects into $A_n$.  But $A_n\leq A_m$ by hypothesis,
so Schr\"oder-Bernstein finally yields $A_n\cong A_m$.
\end{proof}

\begin{lemma}  For an infinite set $A$, the following are equivalent:

(i)	  There exists no injective map from $\omega$ to $A$;\\
(ii)  $A$ bijects with no proper subset of $A$;\\
(iii)  $\omega(A) = \omega$.
\end{lemma}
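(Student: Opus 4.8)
The plan is to prove the three-way equivalence by establishing $(i)\Leftrightarrow(iii)$ almost directly from the definition of $\omega(A)$, and then treating $(i)\Leftrightarrow(ii)$ as the classical Dedekind characterization of finiteness. Everything is carried out in plain \textbf{ZF}: the statement invokes no Trichotomy hypothesis (it is a Lemma, not a $(k)$ Lemma), so no choice principle enters.

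For $(i)\Leftrightarrow(iii)$ I would unwind the meaning of $\omega(A)$ recorded in the Definition, namely that $\omega(A)$ consists of exactly those ordinals that inject into $A$. Since $A$ is infinite, every finite ordinal injects into it by the paper's definition of \emph{infinite}, so each $n<\omega$ belongs to $\omega(A)$ and hence $\omega\leq\omega(A)$. Consequently $\omega(A)=\omega$ holds precisely when $\omega$ is the least ordinal that does \emph{not} inject into $A$, that is, precisely when $\omega$ fails to inject into $A$ — which is exactly condition $(i)$. This direction is immediate and carries no real difficulty.

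The substance lies in $(i)\Leftrightarrow(ii)$, and I would prove the two contrapositives by explicit constructions. For $\neg(ii)\Rightarrow\neg(i)$, suppose $f:A\to A'$ is a bijection onto a proper subset $A'$ of $A$; I fix a single point $a_0\in A\setminus A'$ and define $a_{n+1}=f(a_n)$ by recursion. Because $a_0$ lies outside the range of $f$ while every later term lies inside it, and because $f$ is injective, an induction shows the $a_n$ are pairwise distinct, so $n\mapsto a_n$ injects $\omega$ into $A$. Conversely, for $\neg(i)\Rightarrow\neg(ii)$, I take an injection $g:\omega\to A$ with image $\{b_0,b_1,\dots\}$ and define $h:A\to A$ by $h(b_n)=b_{n+1}$ and $h(x)=x$ for $x$ off the image; then $h$ is a bijection of $A$ onto the proper subset $A\setminus\{b_0\}$.

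I expect no genuine obstacle. The only points needing care are verifying that the recursively defined sequence $\sq{a_n}$ is injective (the distinctness induction) and confirming that neither construction uses choice — the recursion yields a single function from the given $f$ and the one chosen point $a_0$, and the shift map $h$ is defined outright — so both are legitimate in \textbf{ZF}.
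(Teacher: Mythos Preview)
Your proposal is correct and follows essentially the same route as the paper: the equivalence $(i)\Leftrightarrow(iii)$ is read off from the definition of $\omega(A)$ together with the infinitude of $A$, and the equivalence $(i)\Leftrightarrow(ii)$ is handled by the same two contrapositive constructions---shifting along the image of $\omega$ to biject $A$ with $A\setminus\{b_0\}$, and iterating a non-surjective injection from a point outside its range to produce an injection of $\omega$. The only cosmetic difference is that you phrase $\neg(ii)$ via a bijection onto a proper subset while the paper speaks of a non-bijective injection $u:A\to A$, but these amount to the same data.
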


As usual, we call any such set $A$ {\em infinite Dedekind finite}.

\begin{proof} From (iii), which says all finite ordinals inject into $A$, but
$\omega$ doesn't, we get (i) (plus the infinitude of $A$); then (i) and the
infinitude of $A$ amounts to (iii).

Given an injection $w:\omega \rightarrow A$, form a non-bijective
injection $u:A\rightarrow A$ by setting $u(w(k))=w(k+1)$ for $k\in
\omega$ and having $u$ act as the identity on $A\setminus
w(\omega)$. Clearly $A\cong u(A)=A\setminus\{w(0)\}$, a proper
subset. So if (i) fails, (ii) does too.

Given a non-bijective injection $u:A\rightarrow A$ pick $a\not\in u(A)$. We
must have $a, u(a), u(u(a)), u(u(u(a))),\ldots,u^k(a),\ldots$ all distinct, so
map $\omega$ to $A$ by sending $k$ to $u^k(a)$. So (i) fails if (ii) does.
\end{proof}

Dedekind finiteness for $A$ implies Dedekind finiteness for all $nA$, $n>0$.
(Whenever $\omega$ injects into $nA$, at least one copy of $A$ has an infinite
preimage.)

\begin{klemma}
Infinite Dedekind finite sets do not exist.
\end{klemma}

{\bf Proof} For $A$ infinite Dedekind finite, $\omega(A)=\omega$. Apply Lemma~1
to $A \leq 2A\leq \cdots \leq kA$ for an injection $mA\rightarrow nA+R$ with
$m>n$ and $R$ finite; then $A$ infinite means $R$ injects into $A$ and $nA+R$
injects properly into $mA$. Composing two injections, $mA$ injects properly
into itself. Dedekind finiteness now fails for $mA$ (Lemma~2(ii)), a
contradiction.

\begin{klemma}
For every infinite set $A$ there exists $n$ such that
$nA+nA\cong nA$.
\end{klemma}

\begin{proof}
By Lemma~3 and Lemma~2(iii), $\omega(A)>\omega$.  So apply Lemma~1
to $\{iA\}_{i=1,\ldots,k}$ to get $mA \cong nA$ with $m>n$.

Then $nA\cong mA\cong nA+(m-n)A\cong mA+(m-n)A\cong (2m-n)A$, and continuing
this way, $nA\cong (m+q(m-n))A$ for all $q$.
   Taking $q$ large, we may suppose
we had $m>2n$ in the first place. But $mA\cong nA$ certainly implies
$(m+r)A\cong (n+r)A$, so we may even suppose we had $m=2n$.
\end{proof}

\begin{remark}
In {\bf ZF}, as Lindenbaum and Tarski show, $nP\cong nQ$ implies $P\cong
Q$ (see Conway and Doyle).
\end{remark}

\begin{lemma}
If a set $X$ admits a well-ordering, any subquotient $Y$ of $X$
does too.
\end{lemma}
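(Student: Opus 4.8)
The plan is to exploit the one structural advantage that a well-ordering confers over an arbitrary set: a canonical choice of representatives. By definition $Y$ is a quotient $X'/{\sim}$ of some subset $X'\subseteq X$ by an equivalence relation $\sim$. First I would fix a well-ordering of $X$ and restrict it to $X'$; since any subset of a well-ordered set is again well-ordered, $X'$ carries a well-ordering $<$.

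Next I would observe that each element of $Y$ is an equivalence class, that is, a nonempty subset of the well-ordered set $X'$, and so possesses a least element under $<$. This lets me define a map $r\colon Y\rightarrow X'$ sending each class to its least element. Because distinct classes are disjoint, their least elements differ, so $r$ is injective.

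Finally, transporting the well-ordering of $X'$ back along the injection $r$ equips $Y$ with a well-ordering: declare $y_1$ to precede $y_2$ exactly when $r(y_1)<r(y_2)$. Hence $Y$ is well-orderable, as claimed.

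The only delicate point, and the reason this needs stating at all, is that forming a quotient would ordinarily demand a choice of representatives, which in the absence of the Axiom of Choice is precisely what one cannot do in general. The entire force of the hypothesis is that the well-ordering supplies these representatives canonically, as least elements, so no appeal to choice is required. Everything else, namely the restriction of a well-ordering to a subset and the pullback of a well-ordering across an injection, is routine.
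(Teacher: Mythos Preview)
Your proof is correct and follows essentially the same approach as the paper: both fix a well-ordering of $X$, pick the least element of each equivalence class (the paper phrases this as choosing the minimal element $m_y$ of each block $X_y$ in a family of disjoint subsets of $X$), and transport the well-ordering back along the resulting injection $Y\hookrightarrow X$. Your additional remark about why no appeal to choice is needed is accurate and matches the spirit of the paper's use of the lemma.
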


\begin{proof}
Fix a well-ordering of $X$. By definition, $Y$ bijects with a family
$\{X_y\}_{y\in Y}$ of disjoint subsets of $X$.	Each $X_y$ has a minimal
element $m_y$ according to the order on $X$.  By the disjointness of the $X_y$,
$Y$ bijects with $\{m_y\}$, but $\{m_y\}$ has a well-ordering as a subset of
$X$.
\end{proof}

\begin{lemma}
A set $A$ admits a well-ordering if $A\cong A+A$ and there
exists an injection	 $k:2^A\rightarrow A + \kappa$ for some ordinal $\kappa$.
\end{lemma}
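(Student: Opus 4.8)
The plan is first to promote the injection $k$ to a bijection and then to squeeze a well-ordering of $A$ out of the well-ordered ``half'' of $2^{A}$. Since $A\cong A+A$ with $A\neq\emptyset$, inclusion into the first copy is a non-surjective self-injection, so $A$ is Dedekind infinite; hence $A\cong A+n$ for every finite $n$, $2^{A}\cong 2^{A}+2^{A}$, and $A\setminus F\cong A$ for every finite $F\subseteq A$. Splitting $2^{A}$ by whether $k$ lands in $A$ or in $\kappa$ writes $2^{A}=\mathcal V\sqcup\mathcal U$, where $\mathcal U$ injects into the ordinal $\kappa$ (so is well-orderable) and $\mathcal V$ injects into $A$. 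Replacing $\kappa$ by the order type of $\mathcal U$ we may assume $\kappa\le 2^{A}$, whence $A+\kappa\le 2^{A}+2^{A}\cong 2^{A}$; Schr\"oder--Bernstein then upgrades $k$ to a bijection $b\colon A+\kappa\to 2^{A}$. I write $P_{\kappa}=b[\kappa]$ (a well-ordered family of subsets) and $P_{A}=b[A]\cong A$, so $2^{A}=P_{A}\sqcup P_{\kappa}$ and $P_{\kappa}\not\le A$ (else $2^{A}\le A+A\cong A$, against Cantor).

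The decisive step is to show the well-ordered family $P_{\kappa}$ already \emph{separates the points of} $A$: for distinct $a,a'$ some member of $P_{\kappa}$ contains exactly one of them. If not, every set separating $a$ from $a'$ lies in $P_{A}$, hence injects into $A$. But the separating sets are exactly those meeting $\{a,a'\}$ in a single point, and they biject with $2\times 2^{A\setminus\{a,a'\}}\cong 2^{A}$ (using $A\setminus\{a,a'\}\cong A$ and $2^{A}\cong 2^{A}+2^{A}$). That would inject $2^{A}$ into $A$, contradicting Cantor; so $P_{\kappa}$ separates. This separation lemma is the part I am most confident about, and it is the one place where additive idempotency is used cleanly.

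With a well-ordered separating family available, I would finish by exhibiting $A$ as a subquotient of a well-orderable set and invoking Lemma~5. The target is to attach to each $a$ a \emph{finite} isolating pattern from $P_{\kappa}$, namely finite $F,G\subseteq\kappa$ with $\bigcap_{\xi\in F}b_{\xi}\cap\bigcap_{\xi\in G}(A\setminus b_{\xi})=\{a\}$, and to send $a$ to the lexicographically least such pair in the well-orderable set $[\kappa]^{<\omega}\times[\kappa]^{<\omega}$; this injects $A$ into an ordinal. Equivalently, the points isolated by finitely many members of $P_{\kappa}$ are exactly the singleton atoms of the Boolean algebra generated by $P_{\kappa}$, and that algebra has only $\kappa$ many finite combinations, so these points visibly inject into a well-ordered set and cause no trouble.

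I expect the genuine obstacle to be the points that $P_{\kappa}$ separates only ``in the limit'' — where every finite pattern still leaves several candidates — precisely the phenomenon that keeps $2^{\omega}$ from being well-orderable, and precisely what a purely separating family cannot preclude. Ruling these out is where the hypothesis must be used beyond separation: the full force of the \emph{bijection} $b$, that $P_{A}$ exhausts the complement of $P_{\kappa}$ yet is no larger than $A$, must collapse the apparent $2^{\kappa}$ of limit-types down to an ordinal. The natural attempt is to show that a point failing finite isolation would have too many ``cutting'' subsets to fit into $P_{A}\cong A$, re-deriving $2^{A}\le A$; the hard part will be carrying out this count so that only $A+A\cong A$, and never an uncontrolled $A\times A$ (which ZF cannot collapse), is ever invoked.
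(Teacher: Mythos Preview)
Your proof has a genuine gap, and it is exactly the one you yourself flag: separation is not isolation. A well-ordered family that separates the points of $A$ does not, in ZF, produce a well-ordering of $A$ --- your own $2^\omega$ remark shows why --- and you give no argument that the extra constraint $|P_A|\le|A|$ forces every point to be finitely isolated by members of $P_\kappa$. The sketch in your last paragraph (``too many cutting subsets to fit into $P_A$'') is a hope, not a proof; as it stands the argument stops at separation.

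The paper sidesteps this obstacle by deploying $A\cong A+A$ differently. Write $A=B+C$ with $B\cong C\cong A$, and for each $c\in C$ set $S_c=\{X\subseteq A:X\cap C=\{c\}\}$. These families are pairwise \emph{disjoint} and each satisfies $S_c\cong 2^B\cong 2^A$, so by Cantor $k(S_c)\not\subseteq A$ and $T_c:=k(S_c)\cap\kappa$ is nonempty. The $T_c$ are then disjoint nonempty subsets of $\kappa$, exhibiting $C\cong A$ as a subquotient of $\kappa$; Lemma~5 finishes. Your separation lemma is really the same Cantor idea --- a $2^A$-sized family cannot map entirely into the $A$-part --- but applied to the sets separating a single pair $\{a,a'\}$; those families overlap as the pair varies, which is why you get only separation and are then stuck. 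Halving $A$ and indexing by one half is precisely what makes the $2^A$-sized families disjoint and delivers the subquotient map in one stroke, with no isolation step at all. (Note also that no bijection upgrade is needed; the raw injection $k$ suffices.)
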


\begin{proof}
We aim to exhibit $A$ as a subquotient of $\kappa$ (Lemma~5).

Write $A=B+C$, with $A\cong B \cong C$; exhibiting $C$ as a subquotient of
$\kappa$ suffices.

For $c\in C$, define $S_c:=\{X \subseteq A| X\cap C=\{c\}\}$. Naturally
$S_c\cong 2^B\cong 2^A$. Certainly $S_c\cap S_d=\emptyset$ unless $c=d$. Also
$T_c:=k(S_c)\cap \kappa \not=\emptyset$ since $k(S_c)\cong 2^A>A$ (Cantor).
Therefore $\{T_c\}_{c\in C}$ forms a collection of pairwise disjoint nonempty
sets in $\kappa$.
\end{proof}

{\bf Proof of the Main Theorem}:  Fix any set $S$.	Set $A=nS$ with $n$ so large
that $A+A\cong A$ (Lemma~4).

Write ${\scr P}^{0}(A)=A$, ${\scr P}^{i+1}(A)=2^{{\scr P}^{i}(A)}$ and
$\omega^{1}(A)=\omega(A)$, $\omega^{i+1}(A)=\omega(\omega^{i}(A))$.

$k$-Trichotomy for the family $\{{\scr P}^i(A)+\omega^{k-i}({\scr
P}^{k-1}(A))\}_{i=0,\ldots,k-1}$ yields an injection
$${\scr P}^i(A)+\omega^{k-i}({\scr P}^{k-1}(A))\rightarrow
{\scr P}^j(A)+\omega^{k-j}({\scr P}^{k-1}(A))$$ which in turn restricts to an
injection
$$\omega^{k-i}({\scr P}^{k-1}(A))\rightarrow {\scr P}^j(A)+\omega^{k-j}({\scr P}^{k-1}(A))$$
that possesses a (well-ordered) cardinal for its domain.
As such we obtain the existence of still another injection, either of the form
$$\omega^{k-i}({\scr P}^{k-1}(A))\rightarrow {\scr P}^j(A)$$
or
$$\omega^{k-i}({\scr P}^{k-1}(A))\rightarrow \omega^{k-j}({\scr P}^{k-1}(A)).$$
Whereas Hartogs' original argument actually rules out an injection
of former sort (since $j \leq k-1$), the existence of an injection of
the latter sort now forces $i>j$.

The original injection also restricts to
$${\scr P}^i(A)\rightarrow{\scr P}^j(A)+\omega^{k-j}({\scr P}^{k-1}(A))\ .$$
so we get an injection
$${\scr P}^i(A)\rightarrow{\scr P}^{i-1}(A)+\omega^{k-j}({\scr P}^{k-1}(A))\ .$$

Thus ${\scr P}^{i-1}(A)$ satisfies the hypothesis of Lemma~6, so admits a
well-ordering. But $A<2^A$ (by singletons) and then $A<{\scr P}^{i-1}(A)$ (by
induction), so a well-ordering of ${\scr P}^{i-1}(A)$ induces a well-ordering
of $A$ which induces a well-ordering of $S$.

\begin{remark}
One may read our main theorem to say that the existence of a pair of
incomparable cardinals entails the existence of a family of $k$ mutually
incomparable cardinals.	 Amidst two incomparable cardinals, at least one
cardinal  admits no well-ordering.	Blass' appendix below offers a direct
construction of a family of $k$ incomparable cardinals starting from any single
set that admits no well-ordering.
\end{remark}

\begin{remark}	We can find no obvious modification of our
methods to show that $\omega$-trichotomy (meaning that any
countable family of cardinals contains at least one comparable
pair) implies AC.  While we hope to return to this matter, for now
we merely record a few observations.

First, one must distinguish between $\omega$-trichotomy	 and
$\infty$-trichotomy (meaning that any infinite family of cardinals
contains at least one comparable pair).	 Certainly
$\infty$-trichotomy immediately implies $\omega$-trichotomy, but
the converse remains unclear on account of the possibility of an
infinite Dedekind finite family of incomparable cardinals.

Combinatorially (if not set-theoretically), $\omega$-trichotomy
actually turns out stronger than it sounds.	 Recall that a special
case of Ramsey's theorem guarantees that any two-colored complete
graph on a countably infinite set of vertices possesses a
monochromatic subgraph.	 Now, given a countably infinite family of
cardinals, we may regard these as the vertices of a complete graph,
taking {\em comparability} and {\em non-comparability} as the colors.
Ramsey's theorem then promises either an infinite subfamily of
mutually comparable cardinals or an infinite subfamily of mutually
incomparable cardinals; as $\omega$-trichotomy rules out the latter,
$\omega$-trichotomy guarantees that every countably infinite family
of cardinals contains an infinite chain.

We get even more on account of the effective nature of certain proofs
of Ramsey's theorem.  Recall how the proof can run.	 Call the two
colors $c_1$ and $c_2$ and write $c(v_i,v_j)$ for the color of the
edge connecting vertices $v_i$ and $v_j$.  Filter the vertex set
successively in order to produce an infinite subgraph in which
$c=c(v_i,v_j)$ depends only on $i$ at least for $j>i$ and assign
vertex $v_i$ color $c$. In the end, we will have assigned at least
one of the two colors to infinitely many of the vertices that
survive. If we have used {\em exactly} one of the colors infinitely
often, we pass to the subset of vertices that received that color;
otherwise we pass to the subset of vertices that received color
$c_1$.	The proof thus explicitly {\em canonizes} a particular
monochromatic subgraph of our graph.

Combining the results of the previous two paragraphs, from a given
countably infinite family of cardinals,	 if we have
$\omega$-trichotomy, we can isolate within the family a canonical
infinite chain.	 Now we can remove that chain and find another.
Indeed, by induction, $\omega$-trichotomy implies that we may
partition any countably family of distinct cardinals into infinite
chains (and perhaps a finite residual).	 (Observe that the induction
does not depend upon AC because we remove a well-determined chain at
each step.)	  If we like, we can continue applying
$\omega$-trichotomy to cross-sections of such a partition to adduce
still more structure.

Finally, we describe a very simple topos example as weak evidence
against the provability of AC from $\omega$-trichotomy. Consider the
topos of sheaves of ({\bf ZFC}) sets over a two point discrete space.
A (global) injection in this topos means a set injection at each
stalk.	View the elements of this topos simply as ordered pairs of
sets, so one has $(\kappa_1,\kappa_2)$ incomparable with
$(\lambda_1,\lambda_2)$ if $\kappa_1 > \lambda_1$ and $\kappa_2 <
\lambda_2$ or vice-versa. Certainly finite anti-chains of any length
exist. An infinite anti-chain, however, would entail an infinite {\em
descending} chain of cardinals in one coordinate corresponding to any
infinite {\em ascending} chain of cardinal in the other, so
impossible.	 Simplicity notwithstanding, this example shows that no
sufficiently constructive method can extrapolate, from
counterexamples to $k$-trichotomy for all finite $k$, to a
counterexample to $\omega$-trichotomy.	Perhaps a suitable
construction, first with ur-elements and then with forcing, can
exploit this same device in the context of classical models of set
theory.
\end{remark}

\centerline{{\bf Appendix by Andreas Blass}}

\vspace{.1in}
\begin{appendix}
For any set $X$, define $\scr Q(X):=X \times \scr P(X)$ and
$\scr Q^{j}(X):=\scr Q(\scr Q^{j-1}(X))$.
Because of the injection
$X\to\scr Q(X)$ sending any $x$ to $(x,\emptyset)$, we have $X\leq\scr
Q(X)$ and therefore, by induction, $X\leq\scr Q^j(X)$ for all $j$.

\begin{lemma}  For a set $X$ and an ordinal $\kappa$, an injection
$\theta:\scr Q(X)\to X+\kappa$ induces a canonical well-ordering of $X$.
\end{lemma}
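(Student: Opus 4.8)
The plan is to mimic the subquotient strategy of Lemma~6, but to exploit the product structure of $\scr Q(X)=X\times\scr P(X)$ so as to avoid any appeal to a doubling hypothesis such as $A\cong A+A$. For each $x\in X$ the slice $\{x\}\times\scr P(X)$ is a copy of $\scr P(X)$, and these slices are pairwise disjoint; since $\theta$ is injective, their $\theta$-images are pairwise disjoint subsets of $X+\kappa$. Thus the product supplies $X$-many disjoint copies of $\scr P(X)$ for free, playing exactly the role that the sets $S_c$ played in Lemma~6 (where $A\cong A+A$ had to be invoked to manufacture them).

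First I would fix $x$ and restrict $\theta$ to the slice, obtaining an injection $f_x\colon\scr P(X)\to X+\kappa$ given by $f_x(S)=\theta(x,S)$. Splitting $\scr P(X)$ as $f_x^{-1}(X)\sqcup f_x^{-1}(\kappa)$, the first piece injects into $X$ under $f_x$, so by Cantor's theorem ($\scr P(X)\not\leq X$) it cannot be all of $\scr P(X)$; hence $f_x^{-1}(\kappa)\neq\emp$, that is, the slice's image genuinely reaches into $\kappa$. This is the one place the argument uses anything beyond bookkeeping, and the input is only Cantor's theorem.

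I would then define $\rho\colon X\to\kappa$ by letting $\rho(x)$ be the least ordinal in $\theta(\{x\}\times\scr P(X))\cap\kappa$; this minimum exists because $\kappa$ is well-ordered and the set is nonempty by the previous step. Since the slices are disjoint and $\theta$ is injective, the sets $\theta(\{x\}\times\scr P(X))\cap\kappa$ are pairwise disjoint as $x$ ranges over $X$, so their minima are distinct, and therefore $\rho$ is injective. (Equivalently, the family $\{\theta(\{x\}\times\scr P(X))\cap\kappa\}_{x\in X}$ exhibits $X$ as a subquotient of $\kappa$, so Lemma~5 already gives well-orderability; passing to minima simply makes the embedding into $\kappa$ explicit.)

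Finally, pulling back the membership order of $\kappa$ along the injection $\rho$ well-orders $X$. This well-ordering is \emph{canonical} precisely because every ingredient in its construction---the slices, the least ordinals, and the transported order---is determined by $\theta$ and $\kappa$ alone, with no appeal to the Axiom of Choice. I expect no genuine obstacle here: the only nontrivial ingredient is Cantor's theorem ensuring that each slice lands partly in $\kappa$, and everything else is the disjointness bookkeeping that injectivity of $\theta$ renders automatic.
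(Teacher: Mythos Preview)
Your argument is correct and is essentially the paper's own proof: both use Cantor's theorem to show each slice $\{x\}\times\scr P(X)$ must meet $\kappa$ under $\theta$, observe that injectivity of $\theta$ makes the sets $T_x=\theta(\{x\}\times\scr P(X))\cap\kappa$ pairwise disjoint and nonempty, and conclude via Lemma~5 (subquotients of well-ordered sets are well-orderable). Your explicit passage to minima is exactly the content of Lemma~5's proof, so the two arguments coincide.
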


\begin{proof}
For $x\in X$, Cantor's theorem prevents $\theta(\{x\}\times\scr P(X))$
from being included in $X$.	 Thus, as $x$ varies through $X$, the sets
$T_x=\kappa\cap\theta(\{x\}\times\scr P(X)$ are nonempty, pairwise
disjoint subsets of $\kappa$.  This exhibits $X$ as a subquotient of
$\kappa$, so Lemma~5 provides a well-ordering of $X$.
\end{proof}

For set $X$ and integer $k>1$, define well-ordered cardinals
$$\kappa_0(X,k),\dots,\kappa_{k-1}(X,k)$$ by the recursion
$$
\kappa_0(X,k):=\omega(\scr Q^{k-1}(X))\quad\text{and}\quad
\kappa_{i+1}(X,k)=\omega(\scr Q^{k-i-1}(X)+\kappa_i(X,k)).
$$	For $X$ and $k$ fixed, the sequence of cardinals
$\kappa_i:=\kappa_i(X,k)$ increases strictly.

Claim: Whenever $X$ carries no well-ordering, $\{\scr
Q^{k-i-1}(X)+\kappa_i(X,k)\}$
constitutes an explicit example of $k$ pairwise incomparable sets.

If $\{\scr Q^{k-i-1}(X)+\kappa_i\}$ doesn't violate $k$-Trichotomy,
we must have an injection
of the form $\scr Q^{k-i-1}(X)+\kappa_i \rightarrow \scr
Q^{k-j-1}(X)+\kappa_j$, with $i\neq j$.


In particular, $\kappa_i\leq Q^{k-j-1}(X)+\kappa_j$, and so, by
definition of $\kappa_{j+1}$, we have $\kappa_i<\kappa_{j+1}$.	From
this and $i\neq j$, we get that $i<j$.

Now writing $Y:=\scr Q^{k-j-1}(X)$, we have
$$
\scr Q^{j-i}(Y)=\scr Q^{k-i-1}(X)\leq \scr Q^{k-i-1}(X)+\kappa_i  \leq
\scr Q^{k-j-1}(X)+\kappa_j\cong Y+\kappa_j .
$$
Since $j> i$, we obtain an injection $\theta:\scr
Q(Y)\to Y+\kappa_j$.  Lemma~7 now makes $Y$ well-orderable, and $X$ too,
because $X\leq Y$.
\end{appendix}

\bibliographystyle{amsplain}

\end{document}